\DeclareMathOperator\Gal{Gal}
\DeclareMathOperator\supp{supp}
\author{Felix Schremmer}\date{\today}
\title{Newton strata in Levi subgroups}
\newtheorem{theorem}[equation]{Theorem}
\newtheorem{proposition}[equation]{Proposition}
\newtheorem{corollary}[equation]{Corollary}
\theoremstyle{definition}
\newtheorem{definition}[equation]{Definition}
\theoremstyle{remark}
\let\oldqedsymbol\qedsymbol
\def\qedaddendum{}
\def\qedsymbol{\oldqedsymbol\qedaddendum}
\def\rightqed{\pushQED{\qed}\qedhere\popQED}
\begin{document}
\maketitle

\begin{abstract}
Certain Iwahori double cosets in the loop group of a reductive group, known under the names of $P$-alcoves or $(J,w,\delta)$-alcoves, play an important role in the study of affine Deligne-Lusztig varieties. For such an Iwahori double coset, its Newton stratification is related to the Newton stratification of an Iwahori double coset in a Levi subgroup. We show that this relation is closer than previously known.
\end{abstract}

Let $G$ be a reductive group over the local field $F$, whose completion of the maximal unramified extension we denote by $\breve F$. In the context of Langlands program, one would choose $F$ to be a finite extension of $p$-adic rationals, whereas in the context of moduli spaces of shtukas, $F$ would be the field of formal Laurent series over a finite field. In any case, the Galois group $\Gamma = \Gal(\breve F/F)$ is generated by the Frobenius $\sigma$. We moreover pick a $\sigma$-stable Iwahori subgroup $I\subseteq G(\breve F)$. Then the affine Deligne-Lusztig variety associated to two elements $x,b\in G(\breve F)$ is defined as
\begin{align*}
X_x(b) = \{g\in G(\breve F)/I\mid g^{-1}b\sigma(g)\in IxI\}.
\end{align*}

Evidently, the isomorphism type of $X_x(b)$ only depends on the $\sigma$-conjugacy class
\begin{align*}
[b] = \{g^{-1}b\sigma(g)\mid g\in G(\breve F)\}\subseteq G(\breve F)
\end{align*}
and the Iwahori double coset $IxI$. The latter Iwahori double cosets are typically indexed using the extended affine Weyl group $\widetilde W$, so that each coset is given by $I\dot x I$ for a uniquely determined element $x\in \widetilde W$. The set $B(G)$ of $\sigma$-conjugacy classes has an important parametrization due to Kottwitz \cite{Kottwitz1985, Kottwitz1997}, characterizing each $[b]\in B(G)$ by its Newton point $\nu(b)$ and Kottwitz point $\kappa(b)$.

Following \cite[Section~2]{Goertz2015}, we assume without loss of generality that the group $G$ is quasi-split over $F$. We choose a maximal torus $T$ whose unique parahoric subgroup $T_0(\breve F)$ is contained in $I$ and a $\sigma$-stable Borel subgroup $T\subset B$ such that, in the corresponding appartment of the Bruhat-Tits building of $G_{\breve F}$, the alcove fixed by $I$ is opposite to the dominant cone defined by $B$. With this notation, the Kottwitz point $\kappa(b)$ for $b\in G(\breve F)$ lies in $(X_\ast(T)/\mathbb Z\Phi^\vee)_{\Gamma}$, where $\Phi^\vee$ is the set of coroots. The dominant Newton point $\nu(b)$ is an element of $X_\ast(T)_{\Gamma_0}\otimes\mathbb Q$, where $\Gamma_0\subset \Gamma$ is the absolute Galois group of $\breve F$. We identify the extended affine Weyl group $\widetilde W$ as the semidirect product of the finite Weyl group $W = N_G(T)/T$ with $X_\ast(T)_{\Gamma_0}$.

Geometric properties of the affine Deligne-Lusztig variety $X_x(b)$ are closely related to those of the corresponding Newton stratum $[b]\cap IxI\subset G(\breve F)$. Obviously, one is empty if and only if the other is empty. Further properties can be related following \cite[Section~3.1]{Milicevic2020}.

It is an important question to study which of these affine Deligne-Lusztig varieties are non-empty, i.e.\ to determine the set \begin{align*}
B(G)_x = \{[b]\in B(G)\mid [b]\cap IxI\neq\emptyset\}.
\end{align*} An important breakthrough result of Görtz-He-Nie \cite{Goertz2015} is a characterization of all elements $x\in \widetilde W$ where $B(G)_x$ contains the basic $\sigma$-conjugacy class. For this purpose, they introduce the notion of a $(J,w,\sigma)$-alcove, generalizing the previous notion of a $P$-alcove known for split groups. Write  $\Delta\subseteq \Phi$ for the set of simple roots.
\begin{definition}
Let $x\in\widetilde W, w\in W$ and $J\subseteq \Delta$ such that $J=\sigma(J)$. Then we say that $x$ is a \emph{$(J,w,\sigma)$-alcove element} if the following conditions are both satisfied:
\begin{enumerate}[(a)]
\item The element $\tilde x = w^{-1} x\sigma(w)$ lies in the extended affine Weyl group $\widetilde W_M$ of the standard Levi subgroup $M=M_J\supseteq T$ defined by $J$.
\item For all positive roots $\alpha\in \Phi^+$ that are not in the root system $\Phi_J$ generated by $J$, the corresponding root subgroup $U_\alpha\subseteq G(\breve F)$ satisfies
\begin{align*}
U_{w\alpha}\cap \prescript x{}{}I\subseteq U_{w\alpha}\cap I.
\end{align*}
\end{enumerate}
\end{definition}
Observe that $x$ is a $(J,w,\sigma)$-alcove element if it is a $(J,w',\sigma)$-alcove element for any $w'\in wW_J$, where $W_J$ is the subgroup of $W$ generated by the simple reflections coming from $J$ (or equivalently the finite Weyl group of $M_J$). Following Viehmann \cite[Section~4]{Viehmann2021}, we say that $x$ is a \emph{normalized} $(J,w,\sigma)$-alcove element if $w$ has minimal length in its coset $wW_J$.

Assume now that $x$ is a normalized $(J,w,\sigma)$-alcove element and write $\tilde x = w^{-1} x\sigma(w)\in\widetilde W_M$. If $[b]\in B(G)_x$, it is a result of Görtz-He-Nie \cite[Theorem~3.3.1]{Goertz2015} that each element in Newton stratum $IxI\cap [b]$ is of the form $i^{-1} w m \sigma(w^{-1} i)$ for some $i\in I$ and
\begin{align*}
m\in \bigcup_{[b']} \Bigl((I\cap M)\tilde x(I\cap M)\cap [b']\Bigr)\subseteq M,
\end{align*}
with the union taken over all $[b']\in B(M)$ contained in $[b]$. Our main result states that this union is spurious.
\begin{theorem}\label{thm:sigmaConjugacyBijection}
Let $x$ be a normalized $(J,w,\sigma)$-alcove element and $\tilde x = w^{-1}x\sigma(w)\in \widetilde W_M$.
Then we get a bijective map
\begin{align*}
B(M)_{\tilde x}\rightarrow B(G)_x,
\end{align*}
sending a $\sigma$-conjugacy class $[b]_M\in B(M)_{\tilde x}$ to the unique $\sigma$-conjugacy class $[b]_G\in B(G)$ with $[b]_M\subseteq [b]_G$. In this case, the dominant Newton points $\nu_M(b)$ and $\nu_G(b)$ agree as elements of $X_\ast(T)_{\Gamma_0}\otimes\mathbb Q$.
\end{theorem}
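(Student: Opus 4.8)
The plan is to leverage the Görtz–He–Nie result \cite[Theorem~3.3.1]{Goertz2015} quoted just above, which already furnishes a surjection $B(M)_{\tilde x}\twoheadrightarrow B(G)_x$: every $[b]_G\in B(G)_x$ meets $IxI$, and by their description any such intersection point is $\sigma$-conjugate (inside $G(\breve F)$, via the specific element $w$) to some $m\in (I\cap M)\tilde x(I\cap M)$ lying in a $\sigma$-conjugacy class $[b']_M$ with $[b']_M\subseteq [b]_G$; that $[b']_M$ lies in $B(M)_{\tilde x}$ and maps to $[b]_G$. So the content of the theorem is (i) well-definedness of the map $B(M)_{\tilde x}\to B(G)_x$, i.e.\ that $(I\cap M)\tilde x(I\cap M)\cap[b']_M\neq\emptyset$ forces $IxI\cap[b']_G\neq\emptyset$ where $[b']_G\supseteq[b']_M$ is the unique $G$-class containing it — this is immediate since $m\in (I\cap M)\tilde x(I\cap M)\subseteq IxI$ (using that $\tilde x=w^{-1}x\sigma(w)$ together with condition (b) controls the relation between $I$ and $\presig{}\!\ldots$ — more precisely $(I\cap M)\tilde x(I\cap M)$ maps into $IxI$ under $m\mapsto w m\sigma(w^{-1})$, which is exactly the GHN construction run in reverse); and (ii) injectivity, which is the real work.

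For injectivity, suppose $[b_1]_M,[b_2]_M\in B(M)_{\tilde x}$ map to the same $[b]_G$. I would pick representatives $m_1,m_2\in (I\cap M)\tilde x(I\cap M)$ with $m_i\in[b_i]_M$; then $wm_1\sigma(w^{-1})$ and $wm_2\sigma(w^{-1})$ both lie in $IxI\cap[b]_G$, hence are $\sigma$-conjugate in $G(\breve F)$. The goal is to show $m_1,m_2$ are already $\sigma$-conjugate in $M(\breve F)$. The natural tool is the Kottwitz classification: a $\sigma$-conjugacy class in $M$ is determined by its image in $\pi_1(M)_\Gamma$ together with its dominant (with respect to $M$) Newton point. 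The Newton points of $m_1,m_2$ as elements of $M$ are $M$-dominant conjugates of their $G$-Newton points; and the last sentence of the theorem — that $\nu_M=\nu_G$ on the nose — is precisely the statement that the $M$-dominant representative of the Newton point of an element of $(I\cap M)\tilde x(I\cap M)$ lying in $B(G)_x$ is already $G$-dominant. This last fact should follow from the $P$-alcove geometry: condition (b) says the alcove of $x$ sits on the correct side of all the walls $\alpha\notin\Phi_J$, which forces the Newton point to be ``already as dominant as $M$ allows'', i.e.\ the $M$-antidominant-root directions along which $\nu$ could fail to be $G$-dominant are exactly the $\alpha\notin\Phi_J$, on which $\langle\nu,\alpha\rangle\geq 0$ is forced. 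I would prove this Newton-point compatibility first, as a lemma, using the explicit formula for the Newton point in terms of $\tilde x$ (e.g.\ via powers $\tilde x^n$ and the $\sigma$-averaged translation part) and the inequalities from (b).

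Granting $\nu_M(b_1)=\nu_G(b_1)=\nu_G(b_2)=\nu_M(b_2)$ (with all four being the same $G$-dominant, hence $M$-dominant, rational cocharacter), it remains to match the Kottwitz points: I need $\kappa_M(m_1)=\kappa_M(m_2)$ in $\pi_1(M)_\Gamma=(X_\ast(T)/\mathbb Z\Phi^\vee_J)_\Gamma$. We know $\kappa_G(m_1)=\kappa_G(m_2)$ in $(X_\ast(T)/\mathbb Z\Phi^\vee)_\Gamma$, so the two classes differ by an element of the kernel of $\pi_1(M)_\Gamma\to\pi_1(G)_\Gamma$. Here I would use that $m_1,m_2$ lie in the \emph{same} Iwahori double coset $(I\cap M)\tilde x(I\cap M)$ of $M$: the Kottwitz point of any element of a fixed Iwahori double coset in $M$ is determined by the corresponding element $\tilde x\in\widetilde W_M$ (via $\widetilde W_M\to\pi_1(M)_\Gamma$, since $I\cap M$ is connected and the Kottwitz map is trivial on it) — so in fact $\kappa_M(m_1)=\kappa_M(m_2)$ automatically, both being the image of $\tilde x$. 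Thus $[b_1]_M=[b_2]_M$ and injectivity follows.

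I expect the main obstacle to be the Newton-point equality $\nu_M(b)=\nu_G(b)$, i.e.\ showing that the $M$-dominant Newton vector of a class in $B(G)_x$ is automatically $G$-dominant. The subtlety is that a priori the Newton point is only $M$-dominant, and conjugating by $W$ to make it $G$-dominant could move it out of the closed dominant Weyl chamber for $M$ — one must show this does not happen, which is where condition (b) (equivalently, the $P$-alcove condition controlling $\presig x I$ versus $I$ on root subgroups $U_{w\alpha}$ for $\alpha\in\Phi^+\setminus\Phi_J$) enters crucially. Concretely, I would translate (b) into the statement that for each such $\alpha$ the pairing $\langle \nu,\,\alpha\rangle$ has the correct sign, likely by analyzing how $\tilde x$ (and its $\sigma$-powers, to access the Newton point as a limit) interacts with the affine root hyperplanes indexed by $\alpha$, and then invoke that a vector which is $M$-dominant and pairs non-negatively with every $\alpha\in\Phi^+\setminus\Phi_J$ is $G$-dominant.
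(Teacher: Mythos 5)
Your structural plan — read off surjectivity and well-definedness from Görtz–He–Nie, observe that the Kottwitz points in $\pi_1(M)_\Gamma$ are automatically determined by $\tilde x$, and reduce injectivity to the Newton-point equality $\nu_M(b)=\nu_G(b)$ — is sound and matches the final clinching sentence of the paper's proof. However, the way you propose to establish the Newton-point lemma does not work as stated, and this is exactly where the real content of the theorem lies. You suggest computing the Newton point ``via powers $\tilde x^n$ and the $\sigma$-averaged translation part.'' That calculation only recovers the Newton point of the single $\sigma$-conjugacy class determined by $\tilde x$ itself (equivalently, of its minimal-length $\sigma$-conjugate); it does \emph{not} give the Newton point of an arbitrary class $[b']_M\in B(M)_{\tilde x}$, and in general $B(M)_{\tilde x}$ contains many classes with strictly smaller Newton points that are not visible from $\tilde x^{\sigma,n}$. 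Thus the inequality $\langle \nu_M(b'),\alpha\rangle\geq 0$ for $\alpha\in\Phi^+\setminus\Phi_J$ that you need for a general $[b']_M$ is left without a proof.

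The paper avoids this difficulty by a completely different device: it induces on $\ell(x)$ via the Deligne–Lusztig reduction method of Görtz–He, paired with He–Nie's compatibility result showing that a length-decreasing $\sigma$-conjugation sequence $x\to x_{n+1}$ in $\widetilde W$ lifts to a parallel sequence $\tilde x\to\tilde x_{n+1}$ in $\widetilde W_M$ through normalized $(J,w_i,\sigma)$-alcove elements. The ``power of $\tilde x$'' computation you describe appears in the paper only as the base case (when $x$ has minimal length in its $\sigma$-conjugacy class, so $B(G)_x$ and $B(M)_{\tilde x}$ are singletons); the inductive step then propagates the Newton-point equality through the DL reduction tree, and this is what handles the non-generic classes in $B(M)_{\tilde x}$. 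To make your more direct route rigorous you would need a substitute for the induction — for instance, a careful invocation of Mazur's inequality in both $M$ and $G$ against a common $G$-dominant upper bound, followed by a positivity argument to force $\nu_G(b')-\nu_M(b')=0$ — but nothing of this kind is spelled out in your sketch, and ``the inequalities from (b)'' alone do not bridge the gap for classes other than the one cut out by $\tilde x$ itself.
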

Unfortunately, the relationship discussed here has been a source of confusion in the past. While surjectivity of the map $B(M)_{\tilde x}\rightarrow B(G)_x$ follows from \cite[Theorem~3.3.1]{Goertz2015}, it should be noted that the natural map $B(M)\rightarrow B(G)$ is neither injective nor surjective. In particular, for $[b]_G\in B(G)_x$, the intersection $[b]_G\cap M$ may consist of more than one $\sigma$-conjugacy class of $M$. This point is further discussed in the erratum\footnote{Cf.\ \url{https://www.esaga.uni-due.de/f/ulrich.goertz/pdf/Erratum-GHN.pdf} .} to \cite{Goertz2015} and in \cite[Example~5.4]{Viehmann2021}. This latter example moreover demonstrates that the assumption of a \emph{normalized} $(J,w,\delta)$-alcove element is essential for Theorem~\ref{thm:sigmaConjugacyBijection} to hold.

Our theorem together with \cite[Theorem~3.3.1]{Goertz2015} yields a canonical isomorphism $X_{\tilde x}(b)\rightarrow X_x(b)$, sending $g (I\cap M) \in X_{\tilde x}(b)$ to $gw^{-1} I\in X_x(b)$.
The geometric correspondence is mirrored by a corresponding representation-theoretic result of He-Nie \cite[Theorem~C]{He2015b}, comparing class polynomials of $x$ and $\tilde x$. These are certain structure constants describing the cocenter of the Iwahori-Hecke algebra of $\widetilde W$ resp.\ $\widetilde W_M$. If one knows all class polynomials for a given element $x\in\widetilde W$, one can use these to determine many geometric properties the affine Deligne-Lusztig varieties $X_x(b)$ for $[b]\in B(G)$, cf.\ \cite[Theorem~6.1]{He2014}. These properties include dimension as well as the number of top dimensional irreducible components up to the action of the $\sigma$-centralizer of $b\in G(L)$. Moreover, in a certain sense, the number of rational points of the Newton stratum $IxI\cap [b]$ can be expressed using these class polynomials \cite[Proposition~3.7]{He2022}. In this sense, it already follows from \cite{He2015b} that these numerical invariants agree for $X_{\tilde x}(b)$ and $X_x(b)$.

\begin{proof}[Proof of Theorem~\ref{thm:sigmaConjugacyBijection}]
We follow the Deligne-Lusztig reduction method due to Görtz-He \cite{Goertz2010b}, i.e.\ we do an induction on $\ell(x)$.

If $x$ is of minimal length in its $\sigma$-conjugacy class, then $B(G)_x$ contains only one element, being the $\sigma$-conjugacy class defined by $x$. Moreover, He-Nie \cite[Proposition~4.5]{He2015b} prove that in this case $\tilde x$ is of minimal length in its $\sigma$-conjugacy class in $\widetilde W_M$. Hence we only have to show that $\nu_M(\tilde x)$ agrees with $\nu_G(x)$.

Following the definition of Newton points, we consider $\sigma$-twisted powers
\begin{align*}
x^{\sigma,n} =x \sigma(x)\cdots \sigma^{n-1}(x)\in\widetilde W.
\end{align*}
Observe that each $x^{\sigma,n}$ is a $(J,w,\sigma^n)$-alcove element. Let $n$ be sufficiently large such that $x^{\sigma,n}$ is a pure translation element, i.e.\ equal to the image of some $\mu\in X_\ast(T)_{\Gamma_0}$ in $\widetilde W$, and such that $\sigma^n$ is the identity map on $\widetilde W$. Then the Newton point $\nu_G(x)$ is the unique dominant element in the $W$-orbit of $\mu/n$. Similarly, the Newton point $\nu_M(\tilde x)$ is the unique dominant (with respect to $B\cap M$) element in the $W_J$-orbit of $w^{-1}\mu/n$.

The fact that $x^{\sigma,n}$ is a $(J,w,1)$-alcove element implies that $\langle w^{-1}\mu,\alpha\rangle\geq 0$ for all $\alpha\in \Phi^+\setminus\Phi_J$. Hence $\nu_M(\tilde x)$ is already dominant with respect to $B$, and thus $\nu_G(x) = \nu_M(\tilde x)$. This finishes the proof in case $x$ has minimal length in its $\sigma$-conjugacy class.

If $x$ is not of minimal length in its $\sigma$-conjugacy class, we can use \cite[Theorem~A]{He2014b} to obtain a sequence
\begin{align*}
x = x_1\xrightarrow{s_1}\cdots \xrightarrow{s_n} x_{n+1},
\end{align*}
for simple affine reflections $s_i\in \widetilde W$ and elements $x_{i+1} = s_i x_i \sigma(s_i)$ such that $\ell(x_1)=\cdots=\ell(x_{n}) >\ell(x_{n+1})$. From \cite[Lemma~7.1]{He2015b}, we find elements $w_1, \dotsc, w_n\in W$ such that each $x_i$ is a normalized $(J,w_i,\sigma)$-alcove element. Denote the corresponding elements by $\tilde x_i = w_i^{-1} x_i\sigma(w_i)\in\widetilde W_M$, so that the proof of \cite[Corollary~4.4]{He2015b} shows
\begin{align*}
\ell_{\widetilde W_M}(\tilde x_1) = \cdots=\ell_{\widetilde W_M}(\tilde x_{n})>\ell_{\widetilde W_M}(\tilde x_{n+1}).
\end{align*}
Said proof moreover reveals that each $\tilde x_{i+1}$ is conjugate to $\tilde x_i$ either by a simple affine reflection in $\widetilde W_M$ or a length zero element in $\widetilde W_M$.

The Deligne-Lusztig reduction method of Görtz-He \cite{Goertz2010b} yields
\begin{align*}
B(G)_x = &\cdots = B(G)_{x_{n}} = B(G)_{x_{n+1}}\cup B(G)_{s_n x_{n}},\\
B(M)_{\tilde x}=&\cdots = B(M)_{\tilde x_{n}}.
\end{align*}
We moreover know from the aforementioned article of He-Nie that $\tilde x_{n+1} = \tilde s \tilde x_{n}\sigma(\tilde s)$ for some simple affine reflection $\tilde s = w_n s_n w_n^{-1}\in \widetilde W_M$ of $M$. Hence
\begin{align*}
B(M)_{\tilde x_{n}} = B(M)_{\tilde x_{n+1}}\cup B(M)_{\tilde s\tilde x_{n}}.
\end{align*}
By induction, we get bijective and Newton-point preserving maps
\begin{align*}
B(M)_{\tilde x_{n+1}}\rightarrow B(G)_{x_{n+1}},\qquad B(M)_{\tilde s \tilde x_{n}}\rightarrow B(G)_{s_nx_{n}}.
\end{align*}
We conclude that the map $B(M)_{\tilde x}\rightarrow B(G)_x$ is well-defined, surjective and Newton-point preserving. If $[b_1]_M, [b_2]_M\in B(M)_{\tilde x}$ have the same image $[b]_G\in B(G)_x$ under this map, then $\nu_M(b_1) = \nu_G(b) = \nu_M(b_2)$ and $\kappa_M(b_1) = \kappa_M(\tilde x) = \kappa_M(b_2)$, hence $[b_1]_M = [b_2]_M$. This finishes the induction and the proof.
\end{proof}
Let us note the following consequence of Theorem~\ref{thm:sigmaConjugacyBijection}.
\begin{corollary}\label{cor:JwCongruence}
If $x$ is a $(J,w,\sigma)$-alcove element and $[b_1], [b_2]\in B(G)_x$, then \begin{align*}&\nu_G(b_1)\equiv \nu_G(b_2)\pmod{\Phi_J^\vee}.\rightqed\end{align*}
\end{corollary}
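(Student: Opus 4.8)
The plan is to read the corollary off from Theorem~\ref{thm:sigmaConjugacyBijection} together with Kottwitz's parametrization of $B(M)$. First I would pass to the normalized situation: choosing $w'\in wW_J$ of minimal length makes $x$ a normalized $(J,w',\sigma)$-alcove element, as observed right after the definition, and I put $\tilde x=(w')^{-1}x\sigma(w')\in\widetilde W_M$. By the surjectivity part of Theorem~\ref{thm:sigmaConjugacyBijection}, for $i=1,2$ there is a class $[b_i]_M\in B(M)_{\tilde x}$ with $[b_i]_M\subseteq[b_i]$, and the last sentence of that theorem gives $\nu_M(b_i)=\nu_G(b_i)$ in $X_\ast(T)_{\Gamma_0}\otimes\mathbb Q$. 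Hence it suffices to prove $\nu_M(b_1)\equiv\nu_M(b_2)\pmod{\mathbb Q\Phi_J^\vee}$, a statement internal to $M$.

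The next step is to compare Kottwitz points inside $M$. The Kottwitz homomorphism $M(\breve F)\to\pi_1(M)_{\Gamma_0}$ is trivial on the parahoric $I\cap M$, so it is constant on the double coset $(I\cap M)\tilde x(I\cap M)$; passing to $\pi_1(M)_\Gamma$ this yields $\kappa_M(b_1)=\kappa_M(\tilde x)=\kappa_M(b_2)$ — exactly the identity used at the end of the proof of Theorem~\ref{thm:sigmaConjugacyBijection}. By Kottwitz's compatibility of the Newton and Kottwitz points \cite{Kottwitz1997}, the image of $\nu_M(b)$ under the natural map $X_\ast(T)_{\Gamma_0}\otimes\mathbb Q\to\pi_1(M)_\Gamma\otimes\mathbb Q$ coincides with the image of $\kappa_M(b)$; therefore $\nu_M(b_1)$ and $\nu_M(b_2)$ have the same image in $\pi_1(M)_\Gamma\otimes\mathbb Q$.

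It remains to upgrade this equality of images to the desired congruence. The kernel of $X_\ast(T)_{\Gamma_0}\otimes\mathbb Q\to\pi_1(M)_{\Gamma_0}\otimes\mathbb Q=(X_\ast(T)_{\Gamma_0}\otimes\mathbb Q)/\mathbb Q\Phi_J^\vee$ is precisely $\mathbb Q\Phi_J^\vee$, while the further projection onto $\pi_1(M)_\Gamma\otimes\mathbb Q$ is the passage to coinvariants under the finite cyclic group generated by $\sigma$, which for rational vector spaces agrees with the passage to invariants. Since $\nu_M(b_1)-\nu_M(b_2)$ is $\sigma$-invariant and dies in $\pi_1(M)_\Gamma\otimes\mathbb Q$, it already dies in $\pi_1(M)_{\Gamma_0}\otimes\mathbb Q$, i.e.\ $\nu_M(b_1)-\nu_M(b_2)\in\mathbb Q\Phi_J^\vee$; combined with $\nu_M(b_i)=\nu_G(b_i)$ this is the claim. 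The argument is essentially formal once Theorem~\ref{thm:sigmaConjugacyBijection} is in hand; the only step calling for a little care is this last one, where one must not confuse $\pi_1(M)_\Gamma$ with $\pi_1(M)_{\Gamma_0}$, and it is the finiteness of the $\sigma$-action on $X_\ast(T)_{\Gamma_0}\otimes\mathbb Q$ that resolves the issue.
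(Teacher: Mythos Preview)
Your argument is correct and is precisely the intended deduction: the paper records the corollary with an immediate \qedsymbol{} after the statement, meaning it is regarded as a direct consequence of Theorem~\ref{thm:sigmaConjugacyBijection} together with the equality $\kappa_M(b_1)=\kappa_M(\tilde x)=\kappa_M(b_2)$ already used at the end of that proof. The only part you add beyond what the paper leaves implicit is the careful passage from $\pi_1(M)_\Gamma\otimes\mathbb Q$ back to $\pi_1(M)_{\Gamma_0}\otimes\mathbb Q$ via $\sigma$-invariance of Newton points, which is indeed the right way to justify the step.
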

As an application of this corollary, we prove a conjecture of Dong-Gyu Lim \cite{Lim2023}, yielding an alternative criterion to the one from \cite{Goertz2015} for the non-emptiness of the basic Newton stratum in $IxI$.
\begin{definition}
Let $x\in\widetilde W$ be written as $x = \omega s_1\cdots s_{\ell(x)}$ for a length zero element $\omega$ and simple affine reflections $s_1,\dotsc,s_{\ell(x)}\in \widetilde W$. We define the \emph{$\sigma$-support} of $x$ to be the smallest subset $J\subseteq \widetilde W$ containing $s_1,\dotsc,s_{\ell(x)}$ and being closed under the action of the composite automorphism $\sigma\circ\omega$. Denote it by $\supp_\sigma(x)$. We say that $x$ is \emph{spherically $\sigma$-supported} if the subgroup of $\widetilde W$ generated by $\supp_\sigma(x)$ is finite.
\end{definition}
It follows from \cite[Proposition~5.6]{Goertz2019} that $x$ has spherical $\sigma$-support if and only if $B(G)_x = \{[b]\}$ for a basic $\sigma$-conjugacy class $[b]$.
\begin{proposition}Assume that the Dynkin diagram of $\Phi$ is $\sigma$-connected, i.e.\ that the Frobenius $\sigma$ acts transitively on the irreducible components of the root system $\Phi$.

Let $x\in\widetilde W$, and denote by $[b]\in B(G)$ the unique basic $\sigma$-conjugacy class with $\kappa(b) = \kappa(x)$. Then $X_x(b)=\emptyset$ if and only if the following two conditions are both satisfied:
\begin{enumerate}[(a)]
\item The element $x$ does \emph{not} have spherical $\sigma$-support, i.e.\ $B(G)_x$ contains a non-basic $\sigma$-conjugacy class.
\item There exists $J\subsetneq \Delta$ and $w\in W$ such that $x$ is a $(J,w,\sigma)$-alcove element.
\end{enumerate}
\end{proposition}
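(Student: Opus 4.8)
The plan is to prove the two implications separately. For the forward implication, suppose $X_x(b)=\emptyset$, i.e.\ $[b]\notin B(G)_x$. Since $I\subseteq\ker\kappa$, the Kottwitz point is constant on $IxI$, so every $[b']\in B(G)_x$ satisfies $\kappa(b')=\kappa(x)=\kappa(b)$. If $x$ had spherical $\sigma$-support, then \cite[Proposition~5.6]{Goertz2019} would give $B(G)_x=\{[b']\}$ with $[b']$ basic; a basic class being determined by its Kottwitz point, this forces $[b']=[b]$, contradicting emptiness. Hence (a) holds, and $B(G)_x$ --- which is never empty --- contains a non-basic class. Since $[b]\notin B(G)_x$, the Görtz-He-Nie classification \cite{Goertz2015} of the elements with non-empty basic Newton stratum forces $x$ to be a $(J,w,\sigma)$-alcove element for some $\sigma$-stable $J\subsetneq\Delta$ and some $w\in W$; this is (b). Neither half of this direction uses $\sigma$-connectedness.

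For the reverse implication I would argue by contraposition: assuming (b) holds and $[b]\in B(G)_x$, I would deduce that $x$ has spherical $\sigma$-support, contradicting (a). Fix a $\sigma$-stable $J\subsetneq\Delta$ and $w\in W$ as in (b), and set $\bar\nu=\nu_G(b)$, which is central since $[b]$ is basic. Let $[b']\in B(G)_x$ be arbitrary. As above $\kappa(b')=\kappa(x)=\kappa(b)$, and since $[b]$ is the minimum of its Kottwitz fibre in the dominance order, $\nu_G(b')-\bar\nu$ is a non-negative rational combination of the simple coroots. On the other hand, Corollary~\ref{cor:JwCongruence} puts $\nu_G(b')-\bar\nu$ in the $\mathbb{Q}$-span of $\{\alpha^\vee\mid\alpha\in J\}$. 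Because the simple coroots are linearly independent, combining these gives $\nu_G(b')-\bar\nu=\sum_{\alpha\in J}c_\alpha\alpha^\vee$ with all $c_\alpha\geq 0$.

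It remains to show every $c_\alpha$ vanishes. Since $\nu_G(b')$ is dominant and $\bar\nu$ is central, $\sum_{\alpha\in J}c_\alpha\langle\alpha^\vee,\beta\rangle=\langle\nu_G(b'),\beta\rangle\geq 0$ for every $\beta\in\Delta$. Now fix a connected component $\Delta_i$ of the Dynkin diagram. Since $\Delta$ is $\sigma$-connected and $J$ is $\sigma$-stable and proper, $J\cap\Delta_i$ is a proper subset of $\Delta_i$, so it has a vertex adjacent to some $\beta\in\Delta_i\setminus J$. For that $\beta$ all the pairings $\langle\alpha^\vee,\beta\rangle$ with $\alpha\in J$ are $\leq 0$, so the inequality forces $c_\alpha=0$ for every $\alpha\in J$ adjacent to $\beta$. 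Iterating inward --- the inequality at a vertex $\beta\in J\cap\Delta_i$ already known to have $c_\beta=0$ forces $c_\alpha=0$ for its neighbours in $J$ --- and using that a connected piece of $J\cap\Delta_i$ with no boundary in $\Delta_i$ would be an entire connected component of $\Delta$ (impossible here), one annihilates all $c_\alpha$ with $\alpha\in J\cap\Delta_i$, hence all $c_\alpha$. Thus $\nu_G(b')=\bar\nu$, so $[b']$ is basic with Kottwitz point $\kappa(b)$, i.e.\ $[b']=[b]$; therefore $B(G)_x=\{[b]\}$ and $x$ has spherical $\sigma$-support by \cite[Proposition~5.6]{Goertz2019}.

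The only step that is not pure bookkeeping is the propagation argument in the previous paragraph, and it is exactly there that $\sigma$-connectedness is genuinely used: if some connected component of the Dynkin diagram were contained in $J$, the base case would be missing and the conclusion would be false --- as it must be, since the statement itself fails without the hypothesis. Apart from this, the proof is an assembly of Corollary~\ref{cor:JwCongruence}, \cite[Proposition~5.6]{Goertz2019}, the Görtz-He-Nie emptiness criterion, and standard facts about the Kottwitz homomorphism and the dominance order on $B(G)$; I do not anticipate further difficulties.
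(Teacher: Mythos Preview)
Your proof is correct. The forward direction is identical to the paper's. For the backward direction, both you and the paper argue by contradiction from Corollary~\ref{cor:JwCongruence} applied to $[b]$ and another class in $B(G)_x$, but the linear-algebra endgame differs. The paper pairs the congruence $\nu_G(b_x)-\nu_G(b)\in\mathbb{Q}\Phi_J^\vee$ against $2\rho-2\rho_J$: since $\langle\alpha^\vee,2\rho-2\rho_J\rangle=0$ for every $\alpha\in J$ and $\nu_G(b)$ is central, one gets $\langle\nu_G(b_x),2\rho-2\rho_J\rangle=0$; dominance then forces $\langle\nu_G(b_x),\alpha\rangle=0$ for every $\alpha\in\Phi^+\setminus\Phi_J$, so $\Phi=\Phi_J\cup\Phi_{J'}$ with $J'=\Stab_\Delta(\nu_G(b_x))$, and the highest root of each irreducible component lies in $\Phi_J$ or $\Phi_{J'}$, giving $\Delta_i\subseteq J$ or $\Delta_i\subseteq J'$. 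The $\sigma$-connectedness hypothesis then pins down $J=\Delta$ or $J'=\Delta$.

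Your route instead writes $\nu_G(b')-\bar\nu=\sum_{\alpha\in J}c_\alpha\alpha^\vee$ with $c_\alpha\geq 0$ and kills the coefficients by a boundary-inward sweep along each component of the Dynkin diagram. This is a perfectly good alternative: it is more elementary (no $2\rho-2\rho_J$ trick, no highest-root observation) and makes the role of $\sigma$-connectedness completely transparent, at the cost of being slightly longer. The paper's argument is a touch slicker and only ever needs the generic class $[b_x]$, whereas you quantify over all $[b']$ and then invoke \cite[Proposition~5.6]{Goertz2019} once more at the end; but these are cosmetic differences, and the two arguments are logically equivalent.
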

\begin{proof}
If $x$ has spherical $\sigma$-support, we get $IxI\subseteq [b]$, so that indeed $X_x(b)\neq\emptyset$. In the case that $x$ is not a $(J,w,\sigma)$-alcove element for any $J\subsetneq \Delta$, we easily obtain $X_x(b)\neq\emptyset$ by \cite[Theorem~A]{Goertz2015}.

Assume now conversely that (a) and (b) both hold true, so we have to show $X_x(b)=\emptyset$. Let $(J,w)$ be as in (b) such that moreover $x$ is a normalized $(J,w,\sigma)$-alcove element. Let $[b_x]\in B(G)_x$ denote the generic $\sigma$-conjugacy class.

Assume that $[b]\in B(G)_x$. From (b) together with Corollary~\ref{cor:JwCongruence}, we see \begin{align*}\nu(b)\equiv \nu(b_x)\pmod{\Phi_J^\vee}.\end{align*}
In particular
\begin{align*}
\langle \nu_G(b_x)-\nu_G(b),2\rho-2\rho_J\rangle=0.
\end{align*}
Since $\nu_G(b_x)$ is dominant and $b$ is basic, we conclude $\langle \nu_G(b_x),\alpha\rangle=0$ for all $\alpha\in \Phi^+\setminus\Phi_J$.
Thus $\Phi = \Phi_J\cup\Phi_{J'}$ where $J'\subseteq\Delta$ is the stabilizer of $\nu(b_x)$. By $\sigma$-irreducibility and looking at longest roots of irreducible components, we get $J=\Delta$ or $J'=\Delta$.

We assumed $J\neq\Delta$ in (b), so we conclude that $\nu_G(b_x)$ must be central. Thus $[b_x]$ is basic itself. This contradicts (a).
\end{proof}
\emph{Acknowledgements.} We thank Dong-Gyu Lim for explaining his conjecture and Eva Viehmann,  Xuhua He and Quingchao Yu for helpful discussions, in particular X.\ He for one pointing out the paper \cite{He2015b} and Q.\ Yu for pointing out \cite{Goertz2019}. We thank Sian Nie and Xuhua He for their comments on a preliminary version of this article.

\printbibliography
\end{document}